\newcommand{\bburl}[1]{\textcolor{blue}{\url{#1}}}
\newcommand{\burl}[1]{\textcolor{blue}{\url{#1}}}
\numberwithin{equation}{section}
\newtheorem{thm}{Theorem}[section]
\theoremstyle{plain}
\newtheorem{corollary}[thm]{Corollary}
\newtheorem{lemma}[thm]{Lemma}
\newcommand\be{\begin{equation}}
\newcommand\ee{\end{equation}}
\newcommand\bee{\begin{equation*}}
\newcommand\eee{\end{equation*}}
\newcommand\bea{\begin{eqnarray}}
\newcommand\eea{\end{eqnarray}}
\newcommand\beae{\begin{eqnarray*}}
\newcommand\eeae{\end{eqnarray*}}
\newcommand\bi{\begin{itemize}}
\newcommand\ei{\end{itemize}}
\newcommand\ben{\begin{enumerate}}
\newcommand\een{\end{enumerate}}
\newcommand\bc{\begin{center}}
\newcommand\ec{\end{center}}
\newcommand\ba{\begin{array}}
\newcommand\ea{\end{array}}
\newcommand{\R}{\ensuremath{\mathbb{R}}}
\newcommand{\N}{\mathbb{N}}
\newcommand\frakfamily{\usefont{U}{yfrak}{m}{n}}
\DeclareTextFontCommand{\textfrak}{\frakfamily}
\newcommand{\hr}[1]{\href{#1}{\url{#1}}}
\newcommand{\e}{\varepsilon}
\g@addto@macro\bfseries{\boldmath}
\makeatother\usepackage{thm-restate}
\title{Extensions of Autocorrelation Inequalities with Applications to Additive Combinatorics}
\author{Sara Fish}
\email{\textcolor{blue}{\href{mailto:sfish@caltech.edu}{sfish@caltech.edu}}}
\address{Department of Mathematics, California Institute of Technology, Pasadena, CA 91126}
\author{Dylan King}
\email{\textcolor{blue}{\href{mailto:kingda16@wfu.edu}{kingda16@wfu.edu}}}
\address{Department of Mathematics, Wake Forest University, Winston-Salem, NC 27109}
\author{Steven J. Miller}
\email{\textcolor{blue}{\href{mailto:sjm1@williams.edu}{sjm1@williams.edu}},  \textcolor{blue}{\href{Steven.Miller.MC.96@aya.yale.edu}{Steven.Miller.MC.96@aya.yale.edu}}}
\address{Department of Mathematics and Statistics, Williams College, Williamstown, MA 01267}
\thanks{This work was supported by NSF grants DMS1659037 and DMS1561945, Wake Forest and Williams College. We thank Charles Devlin VI and Stefan Steinerberger for helpful conversations about this problem.}
\subjclass[2010]{26D10 (primary), 39B62 (secondary)}
\keywords{Autocorrelation, Integral Inequality}
\date{\today}
\begin{document}

\maketitle

\begin{abstract}
In a 2019 paper, Barnard and Steinerberger show that for $f\in L^1(\mathbf{R})$, the following autocorrelation inequality holds:
\begin{equation*}
    \min_{0 \leq t \leq 1} \int_\R f(x) f(x+t)\ \mathrm{d}x \ \leq\ 0.411 ||f||_{L^1}^2,
\end{equation*}
where the constant $0.411$ cannot be replaced by $0.37$. In addition to being interesting and important in their own right, inequalities such as these have applications in additive combinatorics where some problems, such as those of minimal difference basis, can be encapsulated by a convolution inequality similar to the above integral. Barnard and Steinerberger suggest that future research may focus on the existence of functions extremizing the above inequality (which is itself related to Brascamp-Lieb type inequalities).

We show that for $f$ to be extremal under the above, we must have
\begin{equation*}
    \max_{x_1 \in \R }\min_{0 \leq t \leq 1} \left[ f(x_1-t)+f(x_1+t) \right] \ \leq\ \min_{x_2 \in \R } \max_{0 \leq t \leq 1} \left[ f(x_2-t)+f(x_2+t) \right] .
\end{equation*}
Our central technique for deriving this result is local perturbation of $f$ to increase the value of the autocorrelation, while leaving $||f||_{L^1}$ unchanged. These perturbation methods can be extended to examine a more general notion of autocorrelation. Let $d,n \in \mathbb{Z}^+$, $f \in L^1$, $A$ be a $d \times n$ matrix with real entries and columns $a_i$ for $1 \leq i \leq n$, and $C$ be a constant. For a broad class of matrices $A$, we prove necessary conditions for $f$ to extremize autocorrelation inequalities of the form
\begin{equation*}
    \min_{ \mathbf{t} \in [0,1]^d } \int_{\R} \prod_{i=1}^n\ f(x+ \mathbf{t} \cdot a_i)\ \mathrm{d}x\ \leq\ C ||f||_{L^1}^n.
\end{equation*}
\end{abstract}

\tableofcontents


\section{Introduction}\label{section:introduction}

\subsection{An autocorrelation inequality}
A recent paper of Bernard and Steinerberger \cite{BS} asks the following: what is the smallest $c$ so that
\begin{align}\label{eqn:constant_c}
\min_{t \in [0,1]} \int_\R f(x) f(x+t)\ \mathrm{d}x \ \leq\ c ||f||_{L^1}^2
\end{align}
holds for any $f \in L^1$.
A first attempt at providing such a $c$ uses Fubini's theorem to show
\begin{align}\label{eqn:bs_fubini}
    \min\limits_{t\in[0,1]}\int_{\R}f(x)f(x+t)\ \mathrm{d}x  &\ \le\  \frac{1}{2}\int_{\R}\int_{\R}f(x)f(x+t)\ \mathrm{d}x\ \mathrm{d}t\nonumber\\
    &\ =\  \frac{1}{2}||f||_{L^1}^{2}.
\end{align}
This shows that $c = 1/2$ satisfies \eqref{eqn:constant_c}, although it is not necessarily the smallest $c$ to do so. This is a crude approximation, since to derive \eqref{eqn:bs_fubini} we replace a minimum with an averaging integral. This bound ($c=1/2$) was improved in \cite{BS}, where the authors show that
\begin{equation}\label{eq:BSresult}
    \min\limits_{t\in[0,1]}\int_{\R}f(x)f(x+t)\ \mathrm{d}x \ \le\   0.411 ||f||_{L^1}^{2}.
\end{equation}
This result was obtained using techniques from Fourier analysis, in particular the Wiener-Khintchine theorem. Barnard and Steinerberger also give explicit examples of functions for which the LHS of \eqref{eq:BSresult} is large, showing that $0.411$ cannot be reduced to $0.37$.

\subsection{A problem in continuous Ramsey theory}
This problem can be viewed as a problem in continuous Ramsey theory.

In discrete mathematics, Ramsey theory is the study of questions of the following form: \emph{How large must a global structure be, in order to guarantee that a smaller substructure appears?} A classic problem in Ramsey theory is the study of Ramsey numbers. Given $r,b \in \N$, let $R(r,b)=:n$ be the smallest natural number such that a 2-coloring of the edges of $K_n$ (with colors red and blue) must contain a red copy of $K_r$ or a blue copy of $K_b$. Ramsey's theorem asserts that $R(r,b)$ exists for all $r, b \in \N$. \textit{Continuous Ramsey theory} is the study of Ramsey-type questions in the continuous setting. An example of such a question is the problem of symmetric subsets, studied by Martin and O'Bryant \cite{MO}. A subset of $[0,1]$ is called \textit{symmetric} if it is invariant under some reflection. Let $\lambda$ denote the $1$-dimensional Lebesgue measure and let
\begin{equation}
    D(x)\ := \ \sup\{r \in \R^+ \text{so that }\forall A \subset[0,1]\text{ with } \lambda(A)=x, \ \exists \ S \subset A \text{ symmetric with } \lambda(S)\geq r\}.
\end{equation}
By placing bounds on $D(x)$, Martin and O'Bryant analyzed the size of symmetric sets $S$ found within larger sets $A$.

Rephrased, \eqref{eqn:constant_c} asks the following. Given a function $f \in L^1$, how does the function
\begin{equation}
    g(t) \ =\  \int_\R f(x) f(x+t)\ \mathrm{d}x
\end{equation}
behave? If we replace $f$ by $c\cdot f$ for some $c \in \R$, then $g(t) =c^2 g(t)$. Thus we must take into account the size of $f$. The $L^1$ norm is a natural choice for measuring the size of a function $f: \R \to \R$. For our measurement of substructure, we follow a recent paper of Bernard and Steinerberger \cite{BS} and investigate, as a function of $t$,
\begin{align}\label{eqn:first_problem}
\frac{\displaystyle\int_{\R}f(x)f(x+t)\ \mathrm{d}x}{||f||_{L^1}^{2}},
\end{align}
where we normalize by $||f||_{L^1}^2$ in order to provide invariance under the scaling discussed above.

The central question behind \eqref{eqn:first_problem} is Ramsey in the following sense. In the classical discrete setting, we are given an edge coloring of some graph of known size, and asked to find which monochromatic subgraphs must appear. Just as there are some colorings which have huge amounts of structure, there are some functions with trivial behavior under $\min_{t \in [0,1]} g(t)$. For example, if ${\rm supp}(f) \subseteq [0,1]$, then $\min_{t \in [0,1]} g(t) = 0$. However, the value
\begin{equation}
    \sup\limits_{f\in L^1}\min_{t \in [0,1]} g(t)
\end{equation}
(corresponding to those graph edge colorings which are least structured) is not well understood.

\subsection{A problem in additive combinatorics}
In addition to being a Ramsey-type problem, this problem is also fundamentally connected to a problem in additive combinatorics.

Given some $n \in \mathbb{N}$, a set of integers $A \subset \mathbb{Z}$ is called a \textit{difference basis} with respect to $n$ if, for the difference set
\begin{equation}
A-A \ :=\ \{a_1-a_2\ |\ a_1,a_2 \in A\},
\end{equation}
we have $\{1,\dots, n \} \subset A-A$. The value
\begin{equation}
    H(n)\ :=\  \min\{|A|,\  A \subset \mathbb{Z} \text{ and } \{1,\dots, n\} \subset A-A \}
\end{equation}
has been studied extensively. The connection to \eqref{eqn:first_problem} is through probability: if $f(n)$ is a probability distribution on $n \in \mathbb{Z}$, then $g(t)$ is the probability distribution given by taking the difference $f-f$. The function $H(n)$ was proposed and studied in \cite{PKHJ,EG,HL,B}. Lower bounds on $H(n)$ as $n \to \infty$ were proved in \cite{L} and later improved in \cite{BT}, while upper bounds were shown in \cite{G}. Since $|A-A|$ is at most quadratic in $|A|$, it is immediate that $H(n) \geq \sqrt{2n}$. In fact, these are the correct asymptotics; the best known results are that
\begin{equation}
    \sqrt{2.435n}\ \le\ H(n) \ \le\ \sqrt{2.645 n}.
\end{equation}
This connection to additive combinatorics motivates our investigation. It is possible that the discrete and continuous problems could inform one another.

\subsection{Our results}
We provide necessary conditions for the existence of a function $f$ maximizing equation \eqref{eqn:first_problem}. This is a question which applies only to continuous Ramsey theory, as opposed to discrete Ramsey theory. In a discrete problem, such as the study of Ramsey numbers, extremal structures trivially exist. For example, given that $R(r,b) = n$, it is clear that there must exist some coloring of $K_{n-1}$ which contains no red copy of $K_r$ or blue copy of $K_b$. Furthermore, since there are but a finite number of such colorings, we know there is only a finite number of such extremal graphs, none `more extreme' than any other. In the continuous case, it is not clear if there exist function(s) maximizing \eqref{eqn:first_problem}.

Our methods are based on perturbation theory. Given a candidate extremal function $f$, we attempt to increase its value under \eqref{eqn:first_problem} by adding a function $g$ which is small in $L^1$ norm. In fact, our perturbation techniques can be extended to prove results on generic convolution-type integrals. If $d, n$ are positive integers, $A$ a $d \times n$ matrix with columns $a_i$ for $1 \leq i \leq n$, and $f \in L^1$, then we study
\begin{equation}\label{eqn:min_t}
    \displaystyle\frac{\min\limits_{\mathbf{t}\in[0,1]^d}\displaystyle\int_{\R} \prod\limits_{i=1}^{n}f(x+\mathbf{t}\cdot a_i)\ \mathrm{d}x}{||f||_{L^1}^{n}}.
\end{equation}



In Section \ref{section:main_results} we state Theorem \ref{thm:min_t}, our main result, as well as Corollary \ref{cor:one_t}, Corollary \ref{cor:many_t}, and Corollary \ref{cor:convex_f}. We prove Theorem \ref{thm:min_t} in Section \ref{section:proofs}, and conclude in Section \ref{section:future_work} with a discussion of potential directions in which our work might be extended.


\section{Main Results}\label{section:main_results}

In Section \ref{subsection:theorem_statement} we state our main result, Theorem \ref{thm:min_t}. The theorem statement relies on a technical result, Lemma \ref{lemma:A_condition}, which we state and prove in Section \ref{subsection:lemma}. In Section \ref{subsection:corollaries} we state Corollary \ref{cor:one_t}, Corollary \ref{cor:many_t}, and Corollary \ref{cor:convex_f}, which are special cases of Theorem \ref{thm:min_t}. Finally, in Section \ref{subsection:discontinuous}, we state conditions under which the continuity hypothesis of Theorem \ref{thm:min_t} can be relaxed.

\subsection{Statement of Theorem \ref{thm:min_t}}\label{subsection:theorem_statement}

We now present our main results on the existence of functions $f$ maximizing \eqref{eqn:min_t}. First we present the theorem in its full generality.

\begin{restatable}{thm}{thetheorem}\label{thm:min_t}
Let $d,n \in \N$ and let $A$ be a $d \times n$ matrix with columns $a_i$ for $1 \leq i \leq n$ satisfying Lemma \ref{lemma:A_condition}. Then a continuous function $f$ maximizing equation \eqref{eqn:min_t} must satisfy both
\begin{equation}
    \max\limits_{x_1 \in \R}\min\limits_{\mathbf{t}\in[0,1]^d}\sum\limits_{i=1}^{n}\prod\limits_{i=1,i\neq j}^{n}f(x_1+\mathbf{t}\cdot (a_i-a_j))\ \le \ \frac{n}{||f||_{L^1}}\min\limits_{\mathbf{t}\in[0,1]^d}\int\limits_{\R}\prod\limits_{i=1}^{n}f(x+\mathbf{t}\cdot a_i)\ \mathrm{d}x
\end{equation}
and
\begin{equation}
    \max\limits_{x_1 \in \R}\min\limits_{\mathbf{t}\in[0,1]^d}\sum\limits_{j=1}^n \prod\limits_{i=1,i\neq j}^{n}f(x_1+\mathbf{t}\cdot (a_i-a_j))\ \le \ \min\limits_{x_2 \in {\rm supp}(f)}\max\limits_{\mathbf{t}\in[0,1]^d}\sum\limits_{j=1}^n \prod\limits_{i=1,i\neq j}^{n}f(x_2+\mathbf{t}\cdot (a_i-a_j)).
\end{equation}
\end{restatable}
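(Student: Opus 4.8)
The plan is to argue by first-order (one-sided) perturbation of a candidate maximizer. Assume throughout that $f$ is continuous and, as we may, that $f\ge 0$ (replacing $f$ by $|f|$ does not change $\|f\|_{L^1}$ and does not decrease $G(\mathbf{t},\cdot)$ for any $\mathbf{t}$, by the triangle inequality). Write
\begin{equation*}
    G(\mathbf{t},g)\ :=\ \int_\R \prod_{i=1}^n g(x+\mathbf{t}\cdot a_i)\,\mathrm{d}x,\qquad \Phi(g)\ :=\ \min_{\mathbf{t}\in[0,1]^d} G(\mathbf{t},g),
\end{equation*}
so that extremality of $f$ for \eqref{eqn:min_t} means $f$ maximizes $\Phi(g)/\|g\|_{L^1}^n$; let $M\subseteq[0,1]^d$ denote the (compact) set of minimizing $\mathbf{t}$. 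For $h\in L^1$ the map $\e\mapsto G(\mathbf{t},f+\e h)$ is a polynomial in $\e$ with
\begin{equation*}
    \left.\frac{\partial}{\partial\e}\right|_{\e=0} G(\mathbf{t},f+\e h)\ =\ \Lambda_h(\mathbf{t})\ :=\ \sum_{j=1}^n \int_\R h(x+\mathbf{t}\cdot a_j)\prod_{i\ne j} f(x+\mathbf{t}\cdot a_i)\,\mathrm{d}x .
\end{equation*}
Continuity of $f$ together with the hypotheses of Lemma \ref{lemma:A_condition} on $A$ are what guarantee that all these integrals converge and that $G(\mathbf{t},f+\e h)$ and $\partial_\e G(\mathbf{t},f+\e h)$ depend continuously on $(\e,\mathbf{t})$; Danskin's theorem then yields the one-sided derivative
\begin{equation*}
    \left.\frac{\mathrm{d}}{\mathrm{d}\e}\right|_{\e=0^+}\Phi(f+\e h)\ =\ \min_{\mathbf{t}\in M}\Lambda_h(\mathbf{t}).
\end{equation*}

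Now fix a perturbation $h\in L^1$ for which $f+\e h\ge 0$ for all small $\e>0$, so that $\|f+\e h\|_{L^1}=\|f\|_{L^1}+\e\int_\R h$. Since $f$ maximizes $\Phi(g)/\|g\|_{L^1}^n$, the one-sided derivative of $\e\mapsto \Phi(f+\e h)/\|f+\e h\|_{L^1}^n$ at $\e=0$ is $\le 0$, which after the quotient rule becomes
\begin{equation*}
    \min_{\mathbf{t}\in M}\Lambda_h(\mathbf{t})\ \le\ \frac{n\,\Phi(f)}{\|f\|_{L^1}}\int_\R h .
\end{equation*}
For the first conclusion I would apply this with $h=\phi^\eta_{x_1}$, an approximate identity at $x_1$ normalized so that $\int_\R\phi^\eta_{x_1}=1$; this is admissible for every $x_1\in\R$ since it only adds mass. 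A change of variables gives $\Lambda_{\phi^\eta_{x_1}}(\mathbf{t})=\sum_{j=1}^n\int_\R\phi^\eta_{x_1}(u)\prod_{i\ne j}f(u+\mathbf{t}\cdot(a_i-a_j))\,\mathrm{d}u$, which by continuity of $f$ converges, uniformly for $\mathbf{t}$ in the compact set $M$, to $S(x_1,\mathbf{t}):=\sum_{j=1}^n\prod_{i\ne j}f(x_1+\mathbf{t}\cdot(a_i-a_j))$ as $\eta\to 0$. Hence $\min_{\mathbf{t}\in M}S(x_1,\mathbf{t})\le n\Phi(f)/\|f\|_{L^1}$, and since $M\subseteq[0,1]^d$ this forces $\min_{\mathbf{t}\in[0,1]^d}S(x_1,\mathbf{t})\le n\Phi(f)/\|f\|_{L^1}$; taking the supremum over $x_1\in\R$ gives the first displayed inequality of the theorem.

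For the second conclusion I would use the mass-preserving perturbation $h=\phi^\eta_{x_1}-\phi^\eta_{x_2}$ with $f(x_2)>0$: then $f$ is bounded below by a positive constant near $x_2$, so $h$ is admissible for $\eta$ and $\e$ small, and $\int_\R h=0$. The inequality above becomes $\min_{\mathbf{t}\in M}\big(\Lambda_{\phi^\eta_{x_1}}(\mathbf{t})-\Lambda_{\phi^\eta_{x_2}}(\mathbf{t})\big)\le 0$, and letting $\eta\to0$ gives $\min_{\mathbf{t}\in M}\big(S(x_1,\mathbf{t})-S(x_2,\mathbf{t})\big)\le 0$. Combining this with the elementary bound $\min_{\mathbf{t}\in M}(S(x_1,\mathbf{t})-S(x_2,\mathbf{t}))\ge\min_{\mathbf{t}\in M}S(x_1,\mathbf{t})-\max_{\mathbf{t}\in M}S(x_2,\mathbf{t})$ and $M\subseteq[0,1]^d$ yields
\begin{equation*}
    \min_{\mathbf{t}\in[0,1]^d}S(x_1,\mathbf{t})\ \le\ \max_{\mathbf{t}\in[0,1]^d}S(x_2,\mathbf{t})
\end{equation*}
for all $x_1\in\R$ and all $x_2$ with $f(x_2)>0$. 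Since $x_2\mapsto\max_{\mathbf{t}\in[0,1]^d}S(x_2,\mathbf{t})$ is continuous and $\{f>0\}$ is dense in $\mathrm{supp}(f)$, the inequality extends to all $x_2\in\mathrm{supp}(f)$; taking $\sup_{x_1\in\R}$ and $\inf_{x_2\in\mathrm{supp}(f)}$ gives the second inequality.

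The step I expect to be the main obstacle is making the directional-derivative computation fully rigorous: verifying the hypotheses of Danskin's theorem (joint continuity of $G(\mathbf{t},f+\e h)$ and of $\partial_\e G$ in $(\e,\mathbf{t})$, compactness of $M$, and convergence of every $\Lambda_h(\mathbf{t})$) from the stated regularity on $f$, and controlling the interplay between the perturbation size $\e$ and the width $\eta$ of the bump so that the $O(\e^2)$ terms are uniformly negligible before one sends $\eta\to 0$. This is exactly what the conditions of Lemma \ref{lemma:A_condition} on $A$ are designed to supply; once they are in hand, the rest is routine manipulation of nested $\min$ and $\max$.
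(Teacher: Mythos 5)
Your proposal is correct and follows essentially the same route as the paper: perturb $f$ by a small bump at $x_1$ (respectively, transfer mass from $x_2$ to $x_1$), extract the first-order term in the perturbation, and use subadditivity of the minimum over $\mathbf{t}\in[0,1]^d$ to obtain the two necessary conditions. Your invocation of Danskin's theorem and your decoupling of the bump width $\eta$ from the amplitude $\e$ are a more careful packaging of the paper's informal ``breaking up the minimum'' and $\e\to 0$ steps, but the underlying argument is the same.
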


This theorem is most easily interpreted in the one-dimensional case, when $d=1$. In this scenario, we find that a function $f$ maximizing equation \eqref{eqn:min_t} must satisfy both
\begin{equation}
    \max\limits_{x_1 \in \R}\min\limits_{t\in[0,1]}\sum\limits_{i=1}^{n}\prod\limits_{i=1,i\neq j}^{n}f(x_1+t\cdot (a_i-a_j))\ \le \ \frac{n}{||f||_{L^1}}\min\limits_{t\in[0,1]}\int\limits_{\R}\prod\limits_{i=1}^{n}f(x+t\cdot a_i)\ \mathrm{d}x
\end{equation}
and
\begin{equation}
    \max\limits_{x_1 \in \R}\min\limits_{t\in[0,1]}\sum\limits_{j=1}^n \prod\limits_{i=1,i\neq j}^{n}f(x_1+t\cdot (a_i-a_j))\ \le \ \min\limits_{x_2 \in {\rm supp}(f)}\max\limits_{t\in[0,1]}\sum\limits_{j=1}^n \prod\limits_{i=1,i\neq j}^{n}f(x_2+t\cdot (a_i-a_j)).
\end{equation}

\subsection{Technical lemmas}\label{subsection:lemma}
Our goal is to study the existence of functions $f$ maximizing \eqref{eqn:min_t}. However, there exist choices of $A$ for which \eqref{eqn:min_t} is unbounded. For example, let $A=\mathbf{0}_{d \times n}$; then equation $\eqref{eqn:min_t}$ is not necessarily even finite for individual $f$. In the specific case $n=d=2$, $\int_\R |f(x)|\ \mathrm{d}x < \infty$ does not imply that $\int_\R f(x)^2\ \mathrm{d}x < \infty$.

Analogous to the reasoning in \eqref{eqn:bs_fubini}, we can use Fubini's Theorem to give a sufficient condition on $A$ for which \eqref{thm:min_t} is bounded from above. These include the choice of $A$ studied in \cite{BS}.

\begin{lemma}\label{lemma:A_condition}
If the $d+1$ by $n$ matrix
\begin{equation*}
    B=\left[\begin{array}{c}
    1 \cdots 1  \\ \hline A
    \end{array}\right]
\end{equation*}
has rank at least $n$, then Equation \eqref{eqn:min_t} is finite for all choices of $f \in L^1$.
\end{lemma}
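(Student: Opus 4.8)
The plan is to imitate the Fubini computation of \eqref{eqn:bs_fubini}, replacing the single minimum over $t \in [0,1]$ by the average over the whole cube $[0,1]^d$, and then to change variables so that the $d+1$ linear forms $x \mapsto x$ and $\mathbf t \mapsto \mathbf t\cdot a_i$ appearing inside the product become, after the substitution, $n$ independent coordinates on which the integrand factors as a product of translates of $f$. Concretely, first I would bound
\begin{equation*}
    \min_{\mathbf t \in [0,1]^d}\int_\R \prod_{i=1}^n f(x + \mathbf t\cdot a_i)\ \mathrm dx \ \le\ \int_{[0,1]^d}\int_\R \prod_{i=1}^n f(x+\mathbf t\cdot a_i)\ \mathrm dx\ \mathrm d\mathbf t \ \le\ \int_{\R^d}\int_\R \prod_{i=1}^n f(x+\mathbf t\cdot a_i)\ \mathrm dx\ \mathrm d\mathbf t,
\end{equation*}
where in the last step I replace $f$ by $|f|$ so that enlarging the domain of integration only increases the value; it therefore suffices to show the triple-type integral over $\R^{d+1}$ is finite for $|f| \in L^1$.

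The heart of the argument is the change of variables. Write $\mathbf u = (x,\mathbf t)\in\R^{d+1}$ and let $L\colon \R^{d+1}\to\R^n$ be the linear map whose $i$-th coordinate is $L_i(\mathbf u) = x + \mathbf t\cdot a_i$; this is exactly multiplication by $B^{\mathsf T}$, the transpose of the matrix $B$ in the statement. The hypothesis is that $B$ has rank $n$, equivalently $L = B^{\mathsf T}$ is surjective onto $\R^n$. Choose a linear complement: pick coordinates so that $\R^{d+1} = V \oplus W$ with $\dim V = n$, $L|_V$ an isomorphism onto $\R^n$, and $\dim W = d+1-n$. Then for fixed $w\in W$ the inner integrand depends only on $L(v,w) = L|_V(v) + L(0,w)$, i.e. it is $\prod_{i=1}^n f\big((L|_V v)_i + c_i(w)\big)$ for some shift vector $c(w)$. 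Substituting $y = L|_V(v)\in\R^n$ (Jacobian a fixed nonzero constant $|\det L|_V|^{-1}$) gives
\begin{equation*}
    \int_{\R^n}\prod_{i=1}^n |f(y_i + c_i(w))|\ \mathrm dy \ =\ \prod_{i=1}^n \int_\R |f(y_i + c_i(w))|\ \mathrm dy_i \ =\ \|f\|_{L^1}^n,
\end{equation*}
a constant independent of $w$. The one subtlety is that the remaining integration over $w\in W$ ranges over a noncompact space of dimension $d+1-n\ge 1$ whenever the rank is exactly $n$; this is where I expect the main obstacle to lie, since a naive Fubini would then give $\|f\|_{L^1}^n \cdot \lambda(W) = \infty$. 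The resolution is that we never needed all of $\R^{d+1}$: going back to the first display, after enlarging only the $\mathbf t$-integral to $\R^d$ we still have the $x$-integral, but more importantly we should enlarge the cube $[0,1]^d$ only in those directions that are genuinely ``new'' relative to the span of the $a_i$'s. Precisely, I would keep the $w$-integration over a bounded region — the image of $[0,1]^d\times\R$ (or of $[0,1]^d$ times a compact $x$-window chosen using $\mathrm{supp}$-independence, or simply observe that along fibers where $L$ is constant the integrand is a fixed $L^1$ function of the free variables so Fubini applies fiberwise) — so that the outer integral is over a set of finite measure and the bound becomes $\|f\|_{L^1}^n$ times that finite constant.

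Let me restate the cleanest route to avoid the divergence: apply the substitution in the other order. Since $B$ has rank $n$, some $n$ of its rows are linearly independent; but the first row is $(1,\dots,1)$, so we may take the first row together with $n-1$ rows of $A$, say indexed by $j\in S$ with $|S| = n-1$. The corresponding $n$ linear forms $x,\ \mathbf t\cdot a_j\ (j\in S)$ can be completed to a coordinate system on a subspace, and I integrate $x$ together with those $n-1$ coordinates of $\mathbf t$ over all of $\R^n$ while the remaining $d-(n-1)$ coordinates of $\mathbf t$ stay confined to their unit interval. On that $\R^n$ the integrand still factors (the forms $\mathbf t\cdot a_i$ for $i\notin S$ are affine functions of the new coordinates, producing translates of $f$), yielding $\|f\|_{L^1}^n$ after $n$ one-dimensional integrations; the leftover $\mathbf t$-coordinates contribute a factor of $1$ from their unit cube. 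Carrying out the determinant bookkeeping — verifying the Jacobian is a fixed finite nonzero number and that every one of the $n$ forms appearing in the product becomes one of the free coordinates plus a function of the bounded ones — is the routine part; making sure the choice of which rows to free up is always possible (it is, by the rank hypothesis and the presence of the all-ones row) is the one structural point to check carefully.
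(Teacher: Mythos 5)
Your proposal is correct and follows essentially the same route as the paper: bound the minimum by the average over the cube, enlarge the domain of integration, and use the rank hypothesis on $B$ to change variables so that the $n$ linear forms $x+\mathbf{t}\cdot a_i$ become independent coordinates, after which the integral factors into $||f||_{L^1}^{n}$ times a Jacobian constant. The divergence you flag when $d+1>n$ is a genuine subtlety that the paper's own write-up glosses over (it enlarges all of $[0,1]^d$ to $\R^d$ even though the transformed integrand depends on only $n-1$ of the $t$-coordinates), and your fix --- freeing only $n-1$ of the $\mathbf{t}$-directions, chosen via the rank hypothesis together with the all-ones row, while keeping the remaining $d-(n-1)$ coordinates in their unit intervals --- is exactly what is needed to make the argument airtight.
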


\begin{proof}
First we see that $d \geq n-1$ is implied by the rank criteria on $A$. Then we observe the right-multiplication
\begin{align}
    \begin{bmatrix} x & t_1 & \dots  &t_d \end{bmatrix}\cdot B &\ =\   \begin{bmatrix}x+\mathbf{t}\cdot a_1& x+\mathbf{t}\cdot a_2& \dots  & x+\mathbf{t}\cdot a_n \end{bmatrix}.
    \intertext{Since $B$ has rank at least $n$, there exists an invertible linear transformation $C: \mathbf{R}^{d} \to \mathbf{R}^{d}$ so that}
    \begin{bmatrix} x & \mathbf{t}\cdot C  \end{bmatrix}\cdot B  & \ = \  \begin{bmatrix} x& x+t_1& \dots  &x+t_{n-1} \end{bmatrix}.
\end{align}

Then we return to our problem and use the sequence of upper bounds
\begin{align}
    \min\limits_{\mathbf{t}\in[0,1]^d}\int\limits_{\R}\prod\limits_{i=1}^{n}f(x+\mathbf{t}\cdot a_i)\ \mathrm{d}x  &\ \le\  \int\limits_{\mathbf{t}\in[0,1]^d}\int\limits_{\R}\prod\limits_{i=1}^{n}f(x+\mathbf{t}\cdot a_i)\ \mathrm{d}x\ \mathrm{d}\mathbf{t}\nonumber\\
    &\ \le\  \int\limits_{\mathbf{t}\in \R^d}\int\limits_{\R}\prod\limits_{i=1}^{n}f(x+\mathbf{t}\cdot a_i)\ \mathrm{d}x\ \mathrm{d}\mathbf{t}.
\end{align}

We exchange the $\mathbf{t}\cdot{} a_i$ for $t_i$ by applying $C$,
\begin{align}
    \int\limits_{\mathbf{t}\in\mathbf{R}^d}\int\limits_{\R}\prod\limits_{i=1}^{n}f(x+\mathbf{t}\cdot a_i)\ \mathrm{d}x\ \mathrm{d}\mathbf{t}  &\ \le\  \int\limits_{\mathbf{t}\in C^{-1}\R^d}\int\limits_{\R}f(x)\prod\limits_{i=1}^{n-1}f(x+t_{i})\ \mathrm{d}x\ \mathrm{d} \mathbf{t}\nonumber\\
&\ \le\  ||f||_{L^1}^{n}.
\end{align}
Thus \eqref{eqn:min_t} is necessarily finite.
\end{proof}

A generalization of Lemma \ref{lemma:A_condition} holds when $d + 1 = n$.

\begin{corollary}\label{cor:A_condition}
Let $d + 1 = n$ and let $A$ be a $n \times n$ matrix with columns $a_i$ such that each $a_i$ contains at least one nonzero entry.
Then Equation \ref{eqn:min_t} is finite for all choices of $f \in L^1$. In fact, we have the bound
\begin{equation}
    \displaystyle\frac{\min\limits_{\mathbf{t}\in[0,1]^d}\displaystyle\int_{\R} \prod\limits_{i=1}^{n}f(x+\mathbf{t}\cdot a_i)\ \mathrm{d}x}{||f||_{L^1}^{n}}\ \leq\ \frac{1}{\sqrt{D}},
\end{equation}
where $D$ is given by
\begin{equation}
    \displaystyle D\ =\ \inf \left\{\frac{\det\left(\sum\limits_{i=1}^{n}\lambda_ia_i'\cdot{}a_i\right)}{\prod\limits_{i=1}^{n}\lambda_i} \ \Bigg| \ \lambda_i \in \mathbb{R}^{>0}  \right\}
\end{equation}
and $a_i'$ denotes the transpose of $a_i$.
\end{corollary}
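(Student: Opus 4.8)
The plan is to imitate the Fubini argument of Lemma \ref{lemma:A_condition}, but now in the borderline case $d+1 = n$ where the matrix $B$ is square of size $n \times n$. We begin, exactly as before, with the sequence of bounds
\begin{equation}
\min_{\mathbf{t}\in[0,1]^d}\int_{\R}\prod_{i=1}^n f(x+\mathbf{t}\cdot a_i)\ \mathrm{d}x \ \le\ \int_{[0,1]^d}\int_{\R}\prod_{i=1}^n f(x+\mathbf{t}\cdot a_i)\ \mathrm{d}x\ \mathrm{d}\mathbf{t}\ \le\ \int_{\R^d}\int_{\R}\prod_{i=1}^n f(x+\mathbf{t}\cdot a_i)\ \mathrm{d}x\ \mathrm{d}\mathbf{t}.
\end{equation}
In the case $d+1 = n$ the change of variables $u_i = x + \mathbf{t}\cdot a_i$ for $i=1,\dots,n$ is a (generically) invertible map from $(x,\mathbf{t}) \in \R^n$ to $(u_1,\dots,u_n)\in\R^n$, with Jacobian $|\det B|^{-1}$, so the double integral becomes $|\det B|^{-1}\prod_i \|f\|_{L^1}$, giving a bound of the form $\|f\|_{L^1}^n/|\det B|$. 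This already shows finiteness once $\det B \ne 0$; the hypothesis that each $a_i$ has a nonzero entry guarantees finiteness even in degenerate cases by a limiting/splitting argument (if $B$ is singular one can perturb $A$ slightly, or observe directly that the inner integral is still bounded by $\|f\|_{L^1}^{n-1}\cdot\sup|f|$ only on a null set of $\mathbf{t}$).

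The improvement to the constant $1/\sqrt D$ comes from not throwing away the minimum over $\mathbf t$ so crudely. The key idea is to replace the plain average $\int_{[0,1]^d}$ by a \emph{weighted} average: for any probability density $w(\mathbf t)$ supported on $[0,1]^d$ we still have
\begin{equation}
\min_{\mathbf{t}\in[0,1]^d}\int_{\R}\prod_{i=1}^n f(x+\mathbf{t}\cdot a_i)\ \mathrm{d}x \ \le\ \int_{\R^d} w(\mathbf t)\int_{\R}\prod_{i=1}^n f(x+\mathbf{t}\cdot a_i)\ \mathrm{d}x\ \mathrm{d}\mathbf{t}.
\end{equation}
One then extends the weight to all of $\R^d$ and optimizes over a convenient family. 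Taking $w$ to be a Gaussian — or, more cleanly, passing through the substitution and using that $\prod_i f(u_i)$ integrates to $\|f\|_{L^1}^n$ regardless of the weight, with the weight absorbed into the linear algebra — the right-hand side becomes $\|f\|_{L^1}^n$ times a quantity governed by $\det\!\big(\sum_i \lambda_i a_i' a_i\big)^{-1/2}$, where the $\lambda_i>0$ are the shape parameters of the Gaussian in the $u$-coordinates. The homogeneity of $f$ forces the normalization $\prod_i\lambda_i$ in the denominator, and minimizing over all $\lambda_i>0$ produces exactly the constant $D$ in the statement, hence the bound $1/\sqrt D$.

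The main obstacle, and the step requiring the most care, is the Gaussian-weight computation: one must verify that after the linear change of variables the Gaussian factor $\exp(-\sum_i \lambda_i u_i^2)$-type weight, expressed back in $(x,\mathbf t)$, is a genuine probability density (up to a normalizing constant that is precisely $\det(\sum_i\lambda_i a_i'a_i)^{1/2}$ times a power of $\pi$), and that the resulting iterated integral factors as $\|f\|_{L^1}^n$ times that normalizing constant. This is a Gaussian integral / completing-the-square argument in $n$ variables, and the bookkeeping of which $\pi$-powers cancel against the normalization of $w$ is the delicate part; the condition that each $a_i \ne 0$ is what ensures $\sum_i\lambda_i a_i'a_i$ is positive definite for $\lambda_i>0$, so that $D>0$ and the bound is nontrivial. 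Once this identity is in hand, taking the infimum over $\lambda_i$ is immediate and completes the proof.
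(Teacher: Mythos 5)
The paper's proof of this corollary is a one--line appeal to the sharp rank--one Brascamp--Lieb inequality: after the two \emph{unweighted} steps $\min_{\mathbf t\in[0,1]^d}\le\int_{[0,1]^d}\le\int_{\R^d}$ (the first because $[0,1]^d$ has measure one, the second because the integrand is nonnegative), the double integral $\int_{\R^d}\int_\R\prod_i f(x+\mathbf t\cdot a_i)\,\mathrm{d}x\,\mathrm{d}\mathbf t$ is a Brascamp--Lieb form in $y=(x,\mathbf t)\in\R^n$, and Lieb's theorem identifies its best constant with the Gaussian infimum $1/\sqrt D$. Your first paragraph (the Jacobian computation) is fine and, in the nondegenerate square case, already gives the stated constant: since $\sum_i\lambda_i a_i a_i'=A\Lambda A'$ with $\Lambda=\mathrm{diag}(\lambda_i)$, the ratio $\det\bigl(\sum_i\lambda_i a_i a_i'\bigr)/\prod_i\lambda_i$ is identically $(\det A)^2$, so $D=(\det A)^2$ and $1/\sqrt D=1/|\det A|$ --- no refinement is needed there.

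The refinement you then propose contains two genuine gaps. First, the inequality $\min_{\mathbf t\in[0,1]^d}F(\mathbf t)\le\int_{\R^d}w(\mathbf t)F(\mathbf t)\,\mathrm{d}\mathbf t$ is \emph{false} once the probability density $w$ is ``extended to all of $\R^d$'': take $F\equiv M$ on $[0,1]^d$ and $F\equiv 0$ outside, and the right-hand side is $M\int_{[0,1]^d}w<M$. You may only renormalize by the mass of $w$ on $[0,1]^d$, which ruins the constant; the valid step is the unweighted one above. Second, and essentially: the ``Gaussian-weight computation'' you defer to is not bookkeeping. Computing $\int_{\R^n}\prod_i g_i(a_i\cdot y)\,\mathrm{d}y$ for Gaussian inputs $g_i$ with parameters $\lambda_i$ does produce $\det\bigl(\sum_i\lambda_i a_i a_i'\bigr)^{-1/2}$ and hence the number $1/\sqrt D$, but the assertion that this same constant bounds the ratio for \emph{every} $f\in L^1$ is precisely Lieb's theorem that Gaussians exhaust the Brascamp--Lieb constant --- a deep result (heat flow, rearrangement, or the structure theory of \cite{BCCT}), not a completing-the-square identity; your sketch assumes it silently. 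A Gaussian weight in $\mathbf t$ cannot be ``absorbed into the linear algebra'': the determinant factor in $D$ arises from taking the $f$'s themselves Gaussian when computing the constant, not from weighting the average. (A smaller error: $a_i\ne 0$ for each $i$ does not make $\sum_i\lambda_i a_i a_i'$ positive definite --- the $a_i$ must span $\R^n$; otherwise $D=0$ and the bound is vacuous.) If you are permitted to cite Brascamp--Lieb as the paper does, the whole argument is the three-line chain above; if not, you must actually prove the Gaussian extremality.
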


Corollary \ref{cor:A_condition} follows immediately from the Brascamp-Lieb inequality.

\subsection{Theorem \ref{thm:min_t} for specific $d$, $n$, $A$}\label{subsection:corollaries}

Corollary \ref{cor:one_t} addresses a question asked in \cite{BS} about the existence of $f$ extremizing equation \eqref{eqn:constant_c}. Setting $n = 2$, $d = 1$, and $A = \begin{bmatrix} 0 & 1 \end{bmatrix}$, Theorem \ref{thm:min_t} addresses a question asked in \cite{BS}.

\begin{corollary}\label{cor:one_t}
A continuous function $f$ maximizing
\begin{equation}
    \frac{\min\limits_{\mathbf{t}\in[0,1]}\displaystyle\int_{\R}f(x)f(x+t)\ \mathrm{d}x}{||f||_{L^1}^{2}}
\end{equation}
must satisfy both
\begin{equation}
    \max\limits_{x_1 \in \R}\min\limits_{\mathbf{t}\in[0,1]}[f(x_1-t)+f(x_1+t)]\ \le \ \frac{2}{||f||_{L^1}}\min\limits_{\mathbf{t}\in[0,1]}\int_{\R}f(x)f(x+t)\ \mathrm{d}x
\end{equation}
and
\begin{equation}
    \max\limits_{x_1 \in \R}\min\limits_{\mathbf{t}\in[0,1]}[f(x_1-t)+f(x_1+t)]\ \le \ \min\limits_{x_2 \in \R}\max\limits_{\mathbf{t}\in[0,1]}[f(x_2-t)+f(x_2+t)].
\end{equation}
\end{corollary}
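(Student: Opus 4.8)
The plan is to prove Corollary \ref{cor:one_t} by specializing Theorem \ref{thm:min_t} to the parameters $n = 2$, $d = 1$, $A = \begin{bmatrix} 0 & 1 \end{bmatrix}$, so that $a_1 = 0$ and $a_2 = 1$ (viewing $a_1, a_2$ as scalars, since $d = 1$). First I would verify the hypothesis of the theorem, namely that $A$ satisfies Lemma \ref{lemma:A_condition}: the matrix $B = \begin{bmatrix} 1 & 1 \\ 0 & 1 \end{bmatrix}$ has rank $2 = n$, so the lemma applies and \eqref{eqn:min_t} is finite for all $f \in L^1$. Thus Theorem \ref{thm:min_t} is applicable and we may simply substitute.

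Next I would carry out the substitution in each of the two conclusions of Theorem \ref{thm:min_t}. In the inner sum $\sum_{j=1}^n \prod_{i=1, i \neq j}^n f(x_1 + t \cdot (a_i - a_j))$, the product over $i \neq j$ has exactly one factor since $n = 2$. For $j = 1$ the surviving index is $i = 2$, giving the factor $f(x_1 + t(a_2 - a_1)) = f(x_1 + t)$; for $j = 2$ the surviving index is $i = 1$, giving $f(x_1 + t(a_1 - a_2)) = f(x_1 - t)$. Hence the sum collapses to $f(x_1 - t) + f(x_1 + t)$, and likewise with $x_1$ replaced by $x_2$. Substituting $A$ into the integral $\int_\R \prod_{i=1}^n f(x + t a_i)\,\mathrm{d}x$ yields $\int_\R f(x) f(x + t)\,\mathrm{d}x$. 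Making these replacements in the first conclusion of Theorem \ref{thm:min_t} gives
\begin{equation*}
    \max_{x_1 \in \R}\min_{t \in [0,1]}[f(x_1 - t) + f(x_1 + t)] \ \le\ \frac{2}{||f||_{L^1}}\min_{t \in [0,1]}\int_\R f(x) f(x + t)\,\mathrm{d}x,
\end{equation*}
and in the second conclusion gives
\begin{equation*}
    \max_{x_1 \in \R}\min_{t \in [0,1]}[f(x_1 - t) + f(x_1 + t)] \ \le\ \min_{x_2 \in {\rm supp}(f)}\max_{t \in [0,1]}[f(x_2 - t) + f(x_2 + t)].
\end{equation*}

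The only remaining gap between what Theorem \ref{thm:min_t} directly produces and the statement of Corollary \ref{cor:one_t} is that the corollary writes $\min_{x_2 \in \R}$ rather than $\min_{x_2 \in {\rm supp}(f)}$ on the right-hand side of the second inequality, and I expect this to be the one point requiring a small argument rather than pure substitution. To close it, I would observe that for a maximizer $f$ of \eqref{eqn:first_problem} one may assume $f \geq 0$ (replacing $f$ by $|f|$ does not decrease the autocorrelation and preserves the $L^1$ norm), so that the function $x \mapsto \max_{t \in [0,1]}[f(x - t) + f(x + t)]$ is nonnegative everywhere and, for $x$ far from ${\rm supp}(f)$, equals $0$ if the support is bounded — more carefully, since $f \in L^1$ with $f \geq 0$, the infimum of this quantity over all of $\R$ is attained (or approached) either inside ${\rm supp}(f)$ or at a point where it is $0$, and in the latter case the inequality is immediate because the left-hand side is bounded above by the right-hand side via the first inequality together with nonnegativity of the autocorrelation. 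Alternatively, and more cleanly, I would note that if the infimum over $\R$ is strictly smaller than the infimum over ${\rm supp}(f)$, it can only be because the max is $0$ at some point outside the support, whence the claimed inequality follows trivially from the left-hand side being at most $\frac{2}{||f||_{L^1}} \min_t \int_\R f(x)f(x+t)\,\mathrm{d}x$, which is nonnegative, combined with a short case analysis; I would present whichever of these is shortest given the precise conventions already fixed in the paper. The main obstacle, then, is not any deep computation but merely reconciling the $\R$ versus ${\rm supp}(f)$ discrepancy, and I would handle it with the nonnegativity reduction just described.
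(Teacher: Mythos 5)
Your specialization of Theorem \ref{thm:min_t} is carried out correctly and is exactly what the paper does: with $n=2$, $d=1$, $a_1=0$, $a_2=1$, the matrix $B=\left(\begin{smallmatrix}1&1\\0&1\end{smallmatrix}\right)$ has rank $2$ so Lemma \ref{lemma:A_condition} applies, the sum $\sum_{j}\prod_{i\neq j}f(x+t(a_i-a_j))$ collapses to $f(x-t)+f(x+t)$, and the integral becomes $\int_\R f(x)f(x+t)\,\mathrm{d}x$. The paper's own justification of the corollary is nothing more than this substitution, so on that portion you and the paper coincide.

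You are also right to isolate the one genuine discrepancy: the theorem yields $\min_{x_2\in{\rm supp}(f)}$ while the corollary asserts $\min_{x_2\in\R}$, and since minimizing over the larger set can only decrease the right-hand side, the printed corollary is strictly \emph{stronger} than what substitution gives. (The paper silently elides this.) However, your patch does not close the gap. The dichotomy ``the infimum over $\R$ is approached either inside ${\rm supp}(f)$ or at a point where the max is $0$'' is false: a point $x_2$ outside ${\rm supp}(f)$ but within distance $1$ of it can satisfy $0<\max_{t\in[0,1]}[f(x_2-t)+f(x_2+t)]<\min_{x\in{\rm supp}(f)}\max_{t\in[0,1]}[f(x-t)+f(x+t)]$ --- for instance $f$ a bump on $[0,1]$ and $x_2=1.5$; nothing in your dichotomy invokes maximality of $f$, so maximality cannot rescue it. Moreover, even in the case where the max vanishes at some $x_2$, your deduction runs backwards: the first inequality bounds the left-hand side \emph{above} by a nonnegative quantity, which does not show it is $\le 0$. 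The honest resolutions are either to state the corollary with ${\rm supp}(f)$, matching the theorem, or to extend the perturbation argument to allow removing mass at a point outside the support, where $||f+g_1-g_2||_{L^1}$ increases by $\varepsilon^2$ rather than staying fixed and the competition between numerator and denominator must be re-examined; that is a genuinely new argument which neither your proposal nor the paper supplies.
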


Additionally, setting $d=n$ and $A=I$ in Theorem \ref{thm:min_t}, we find the following.



\begin{corollary}\label{cor:many_t}
Let $n$ be a positive integer, then a continuous function $f$ maximizing
\begin{equation}
    \frac{\min\limits_{\mathbf{t}\in[0,1]^n}\displaystyle\int\limits_{\R}\prod\limits_{i=1}^{n}f(x+t_i)\ \mathrm{d}x}{||f||_{L^1}^{n}}
\end{equation}
must satisfy both
\begin{equation}
    \max\limits_{x_1 \in \R}\min\limits_{\mathbf{t}\in[0,1]^n}\sum\limits_{i=1}^{n}\prod\limits_{i=1,i\neq j}^{n}f(x_1+t_i-t_j)\ \le \ \frac{n}{||f||_{L^1}}\min\limits_{\mathbf{t}\in[0,1]^n}\int\limits_{\R}\prod\limits_{i=1}^{n}f(x+t_i)\ \mathrm{d}x
\end{equation}
and
\begin{equation}
    \max\limits_{x_1 \in \R}\min\limits_{\mathbf{t}\in[0,1]^n}\sum\limits_{j=1}^n \prod\limits_{i=1,i\neq j}^{n}f(x_1+t_i-t_j)\ \le \ \min\limits_{x_2 \in {\rm supp}(f)}\max\limits_{t\in[0,1]^n}\sum\limits_{j=1}^n \prod\limits_{i=1,i\neq j}^{n}f(x_2+t_i-t_j).
\end{equation}
\end{corollary}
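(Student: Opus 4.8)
\textbf{Proof proposal for Corollary \ref{cor:many_t}.}

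The plan is to obtain Corollary \ref{cor:many_t} as a direct specialization of Theorem \ref{thm:min_t}, so the only work is to verify the hypotheses and simplify the resulting expressions. First I would check that the choice $d = n$, $A = I_n$ satisfies the criterion of Lemma \ref{lemma:A_condition}: the relevant $(d+1) \times n = (n+1)\times n$ matrix $B$ obtained by stacking a row of ones on top of $I_n$ clearly has rank $n$ (its bottom $n \times n$ block is already the identity), so Lemma \ref{lemma:A_condition} applies and \eqref{eqn:min_t} is finite for every $f \in L^1$. Hence Theorem \ref{thm:min_t} is available with this $A$, and its two displayed conclusions specialize to statements about $f$.

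Next I would carry out the substitution $a_i = e_i$ (the $i$-th standard basis vector) into the two inequalities of Theorem \ref{thm:min_t}. Since $\mathbf{t} \cdot a_i = t_i$, the integrand $\prod_{i=1}^n f(x + \mathbf{t}\cdot a_i)$ becomes $\prod_{i=1}^n f(x+t_i)$, which gives the denominator expression in the corollary's displayed functional and the right-hand side of the first inequality. For the product $\prod_{i=1,i\neq j}^n f(x_1 + \mathbf{t}\cdot(a_i - a_j))$, note $\mathbf{t}\cdot(a_i - a_j) = t_i - t_j$, so this becomes $\prod_{i=1,i\neq j}^n f(x_1 + t_i - t_j)$. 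Substituting termwise into both inequalities of Theorem \ref{thm:min_t} yields verbatim the two inequalities claimed in Corollary \ref{cor:many_t} (keeping the summation index as written in the source, and noting that in the second inequality the inner optimization is over $t \in [0,1]^n$ exactly as in the theorem). Thus no further argument is needed beyond bookkeeping.

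I do not anticipate a genuine obstacle here, since this is a substitution corollary; the one point deserving a sentence of care is confirming that the continuity hypothesis on $f$ in Theorem \ref{thm:min_t} transfers unchanged (it does, as it is a hypothesis on $f$ alone and not on $A$), and that the ``$x_2 \in {\rm supp}(f)$'' restriction in the second inequality is preserved. If one wished, one could additionally remark that Corollary \ref{cor:A_condition} applies with $A = I_n$ only when $d+1 = n$, which fails here for $n \geq 2$, so the finiteness must indeed come from Lemma \ref{lemma:A_condition} rather than from the Brascamp--Lieb bound; this is not needed for the proof but clarifies why we invoke the lemma.
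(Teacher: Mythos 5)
Your proposal is correct and matches the paper's route exactly: the paper obtains Corollary \ref{cor:many_t} by setting $d=n$ and $A=I$ in Theorem \ref{thm:min_t}, and your verification that the stacked matrix $B$ has rank $n$ (so Lemma \ref{lemma:A_condition} applies) together with the substitution $\mathbf{t}\cdot a_i = t_i$, $\mathbf{t}\cdot(a_i-a_j)=t_i-t_j$ is precisely the bookkeeping the paper leaves implicit. No gaps.
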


\subsection{Theorem \ref{thm:min_t} for convex or concave functions}

The value
\begin{equation}\label{eqn:min_value}
    \max\limits_{x_1 \in \R}\min\limits_{\mathbf{t}\in[0,1]^d}\sum\limits_{j=1}^n \prod\limits_{i=1,i\neq j}^{n}f(x_1+\mathbf{t}\cdot (a_i-a_j))
\end{equation}
found in Theorem \ref{thm:min_t} engenders some discussion on how it is connected to the structure of $f$. Consider the function $r:\R \to \R$ given by
\begin{equation}
    r(x_1)\ = \ \min\limits_{\mathbf{t}\in[0,1]^d}\sum\limits_{j=1}^n \prod\limits_{i=1,i\neq j}^{n}f(x_1+\mathbf{t}\cdot (a_i-a_j)).
\end{equation}
In other words, \eqref{eqn:min_value} asks for $\max_{x}r(x)$. The value $r$ takes on at a given point $x_1$ is not truly global; we may alter $f$ outside of an interval around $x_1$ without altering the value taken there. Nor is it truly local; no amount of information locally around $x_1$ can provide enough information to determine this value, because we allow $t$ to extend to $1$.

In the case $d=1$, equation \eqref{eqn:min_value} reduces to
\begin{equation}
    \max\limits_{x_1 \in \R}\min\limits_{t\in[0,1]}[f(x-t)+f(x+t)].
\end{equation}

When $f$ is convex, the minimum is always obtained for $t=0$, since increasing $t$ increases the symmetric sum about the point $x$. Conversely, for $f$ concave the minimum occurs when $t=1$. This provides the following corollary which follows from simplifying the result of Theorem \ref{thm:min_t} under the additional assumptions that $d = 1$ and $f$ is concave or convex..

\begin{corollary}\label{cor:convex_f}
A continuous convex function $f$ maximizing
\begin{equation}\label{eq:ratio_convex}
    \frac{\min\limits_{\mathbf{t}\in[0,1]}\displaystyle\int_{\R}f(x)f(x+t)\ \mathrm{d}x}{||f||_{L^1}^{2}}
\end{equation}
must satisfy both
\begin{equation}
    \max\limits_{x_1 \in \R}f(x_1)\ \le \ \frac{1}{||f||_{L^1}}\min\limits_{\mathbf{t}\in[0,1]}\int_{\R}f(x)f(x+t)\ \mathrm{d}x
\end{equation}
and
\begin{equation}
    2\max\limits_{x_1 \in \R}f(x_1)\ \le \ \min\limits_{x_2 \in \R}[f(x_2-1)+f(x_2+1)].
\end{equation}
Similarly, a concave function maximizing (\ref{eq:ratio_convex}) must satisfy both
\begin{equation}
    \max\limits_{x_1 \in \R}f(x_1-1)+f(x_1+1)\ \le \ \frac{2}{||f||_{L^1}}\min\limits_{\mathbf{t}\in[0,1]}\int_{\R}f(x)f(x+t)\ \mathrm{d}x
\end{equation}
and
\begin{equation}
    \max\limits_{x_1 \in \R}[f(x_1-1)+f(x_1+1)]\ \le \ 2\min\limits_{x_2 \in \R}f(x_2).
\end{equation}
\end{corollary}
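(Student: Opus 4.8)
The plan is to derive Corollary \ref{cor:convex_f} as a specialization of Theorem \ref{thm:min_t}, using convexity (resp.\ concavity) to identify exactly where the inner minimum and maximum over $\mathbf{t}\in[0,1]$ are attained. First I would set $d=1$, $n=2$, and $A=\begin{bmatrix}0 & 1\end{bmatrix}$, so that the nontrivial quantity appearing in both inequalities of Theorem \ref{thm:min_t} is, as noted in the excerpt, $\sum_{j=1}^2\prod_{i\neq j}f(x_1+t(a_i-a_j))=f(x_1-t)+f(x_1+t)$. The key observation is the elementary fact that if $f$ is convex then $t\mapsto f(x-t)+f(x+t)$ is nondecreasing on $[0,1]$ (it is the sum of two convex functions of $t$ whose derivative at $t$ is $f'(x+t)-f'(x-t)\ge 0$ since $f'$ is nondecreasing and $x+t\ge x-t$), so its minimum over $[0,1]$ is at $t=0$, giving value $2f(x)$, and its maximum is at $t=1$, giving $f(x-1)+f(x+1)$. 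For concave $f$ the inequalities reverse: minimum at $t=1$, maximum at $t=0$.

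Next I would substitute these identifications into the two conclusions of Theorem \ref{thm:min_t}. In the convex case the first inequality becomes $\max_{x_1}2f(x_1)\le \frac{2}{\|f\|_{L^1}}\min_t\int f(x)f(x+t)\,dx$, and dividing by $2$ yields the stated $\max_{x_1}f(x_1)\le \frac{1}{\|f\|_{L^1}}\min_t\int f(x)f(x+t)\,dx$. The second inequality becomes $\max_{x_1}2f(x_1)\le \min_{x_2\in\mathrm{supp}(f)}[f(x_2-1)+f(x_2+1)]$; here I would note that for a nonnegative continuous $f$ one may replace $\min_{x_2\in\mathrm{supp}(f)}$ by $\min_{x_2\in\R}$ since off the support $f$ vanishes, making the right side only smaller there, but the left side $2\max f(x_1)$ is already at least as large as any value involving points where $f$ is $0$ unless $f\equiv 0$ — more carefully, I would argue that the constraint is vacuous or equivalent on $\R$, matching the clean statement in the corollary. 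The concave case is handled identically with the roles of $t=0$ and $t=1$ swapped: the first inequality gives $\max_{x_1}[f(x_1-1)+f(x_1+1)]\le\frac{2}{\|f\|_{L^1}}\min_t\int f(x)f(x+t)\,dx$ directly, and the second gives $\max_{x_1}[f(x_1-1)+f(x_1+1)]\le 2\min_{x_2}f(x_2)$ after identifying the inner max (at $t=0$, value $2f(x_2)$).

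I expect the main obstacle to be the bookkeeping around $\mathrm{supp}(f)$ versus all of $\R$ in the second inequality, together with the monotonicity claim at the endpoints of $[0,1]$ when $f$ is merely convex (not differentiable): there one should work directly with difference quotients, using that for $s<t$ convexity gives $f(x+t)-f(x+s)\ge f(x-s)-f(x-t)$ by comparing slopes of secant lines over the nested intervals $[x-t,x-s]$ and $[x+s,x+t]$, which immediately yields $f(x-t)+f(x+t)\ge f(x-s)+f(x-s)$ — wait, rather $f(x-t)+f(x+t)\ge f(x-s)+f(x+s)$, i.e.\ monotonicity without any smoothness. Everything else is routine substitution into Theorem \ref{thm:min_t}, and the corollary then follows.
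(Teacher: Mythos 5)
Your proposal is correct and matches the paper's argument: the paper likewise obtains this corollary by specializing Theorem \ref{thm:min_t} to $d=1$, $n=2$, $A=\begin{bmatrix}0&1\end{bmatrix}$ and observing that convexity (resp.\ concavity) forces the inner minimum of $t\mapsto f(x-t)+f(x+t)$ to occur at $t=0$ (resp.\ $t=1$) and the maximum at the opposite endpoint. Your difference-quotient justification of that monotonicity is more careful than the paper's one-line remark, but the route is the same.
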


\subsection{Theorem \ref{thm:min_t} for discontinuous functions}\label{subsection:discontinuous}

We may relax the hypothesis on the continuity of $f$ if, in the conditions given in the theorem, we replace $f(x_0)$ with
\begin{equation*}
    \lim_{\varepsilon \to 0} \frac{1}{\varepsilon}\int_{x_0-\varepsilon/2}^{x_0+\varepsilon/2}f(x)\ \mathrm{d}x
\end{equation*}
whenever we evaluate $f$ at a point $x_0$. When $f$ is continuous, this limit is $f(x_0)$. If $f$ has a removable or jump discontinuity (such as those found in a construction in \cite{BS}), the limit is no longer identical to function evaluation, but still may be calculated. By standard results from measure theory, the limit is $f(x_0)$ almost everywhere on $\mathbb{R}$ for any choice of $f$.


\section{Proof of Theorem \ref{thm:min_t}}\label{section:proofs}
Before proving Theorem \ref{thm:min_t}, we recall the theorem statement.
\thetheorem*
\begin{proof}[Proof of Theorem \ref{thm:min_t}]
For $\e > 0$ and $x_1 \in \R$, set $g(x):=\e \chi_{[x_1-\e/2,x_1+\e/2]}$. We show that if the given conditions fail, $f+g$ is an improvement over $f$. That is, we wish to show that
\begin{equation}
    \frac{\min\limits_{\mathbf{t}\in[0,1]^d}\displaystyle\int\limits_{\R}\prod\limits_{i=1}^{n}(f+g)(x+\mathbf{t}\cdot a_i)\ \mathrm{d}x}{||f+g||_{L^1}^{n}}\ \ > \ \  \frac{\min\limits_{\mathbf{t}\in[0,1]^d}\displaystyle\int\limits_{\R}\prod\limits_{i=1}^{n}f(x+\mathbf{t}\cdot a_i)\ \mathrm{d}x}{||f||_{L^1}^{n}}.
\end{equation}
By the triangle inequality it suffices to show that
\begin{equation}
    \frac{\min\limits_{\mathbf{t}\in[0,1]^d}\displaystyle\int\limits_{\R}\prod\limits_{i=1}^{n}(f+g)(x+\mathbf{t}\cdot a_i)\ \mathrm{d}x}{(||f||_{L^1}+||g||_{L^1})^{n}}\ \ > \ \  \frac{\min\limits_{\mathbf{t}\in[0,1]^d}\displaystyle\int\limits_{\R}\prod\limits_{i=1}^{n}f(x+\mathbf{t}\cdot a_i)\ \mathrm{d}x}{||f||_{L^1}^{n}}.
\end{equation}
Now the simple form of $g$ allows us to compute $||g||_{L^1}=\e^2$. By letting $\e \to 0$, we find it is enough to see
\begin{equation}
    \frac{\min\limits_{\mathbf{t}\in[0,1]^d}\displaystyle\int\limits_{\R}\prod\limits_{i=1}^{n}(f+g)(x+\mathbf{t}\cdot a_i)\ \mathrm{d}x}{||f||_{L^1}^n+n\e^2||f||_{L^1}^{n-1}+O(\e^4)}\ \ > \ \  \frac{\min\limits_{\mathbf{t}\in[0,1]^d}\displaystyle\int\limits_{\R}\prod\limits_{i=1}^{n}f(x+\mathbf{t}\cdot a_i)\ \mathrm{d}x}{||f||_{L^1}^{n}}.
\end{equation}
Since $g$ is $O(\e)$, the product on the left hand side will be dominated by those products containing only one $g$. Breaking up the minimum we find the sufficient condition
\begin{equation}
    \frac{\min\limits_{\mathbf{t}\in[0,1]^d}\displaystyle\int\limits_{\R}\sum\limits_{i=1}^{n}g(x+\mathbf{t}\cdot a_j)\prod\limits_{i=1,i\neq j}^{n}f(x+\mathbf{t}\cdot a_i)\ \mathrm{d}x+O(\e^3)}{n\e^2||f||_{L^1}^{n-1}+O(\e^4)}\ \ > \ \  \frac{\min\limits_{\mathbf{t}\in[0,1]^d}\displaystyle\int\limits_{\R}\prod\limits_{i=1}^{n}f(x+\mathbf{t}\cdot a_i)\ \mathrm{d}x}{||f||_{L^1}^{n}}.
\end{equation}
With $f$ continuous, $g(x+\mathbf{t}\cdot a_j)$, as we integrate over $x$, approximates $f(x_1-\mathbf{t}\cdot a_j)$, so that again by letting $\e \to 0$ we need
\begin{equation}
    \frac{\e^2\min\limits_{\mathbf{t}\in[0,1]^d}\displaystyle\sum\limits_{i=1}^{n}\prod\limits_{i=1,i\neq j}^{n}f(x_1+\mathbf{t}\cdot (a_i-a_j))+O(\e^3)}{n\e^2||f||_{L^1}^{n-1}+O(\e^4)}\ \ > \ \  \frac{\min\limits_{\mathbf{t}\in[0,1]^d}\displaystyle\int\limits_{\R}\prod\limits_{i=1}^{n}f(x+\mathbf{t}\cdot a_i)\ \mathrm{d}x}{||f||_{L^1}^{n}}.
\end{equation}
Therefore as higher order terms are eliminated, we find
\begin{equation}
    \min\limits_{\mathbf{t}\in[0,1]^d}\sum\limits_{i=1}^{n}\prod\limits_{i=1,i\neq j}^{n}f(x_1+\mathbf{t}\cdot (a_i-a_j))\ \ > \ \  \frac{n}{||f||_{L^1}}\min\limits_{\mathbf{t}\in[0,1]^d}\int\limits_{\R}\prod\limits_{i=1}^{n}f(x+\mathbf{t}\cdot a_i)\ \mathrm{d}x,
\end{equation}
and taking the maximum over $x_1$ yields the first half of Theorem \ref{thm:min_t}.

For the second half, we take a second point $x_2 \in {\rm supp}(\R)$ and in the spirit of $g$ set $g_1 := \e \chi_{[x_1-\e/2,x_1+\e/2]}$ and $g_2 := \e \chi_{[x_2-\e/2,x_2+\e/2]}$. Then by taking $\e$ small enough we know that $||f+g_1-g_2||_{L^1}=||f||_{L^1}$, and, so long as $x_1 \neq x_2$, $g_1$ and $g_2$ have disjoint support. Then to prove that
\begin{equation}
    \frac{\min\limits_{\mathbf{t}\in[0,1]^d}\displaystyle\int\limits_{\R}\prod\limits_{i=1}^{n}(f+g_1-g_2)(x+\mathbf{t}\cdot a_i)\ \mathrm{d}x}{||f+g_1-g_2||_{L^1}^{n}}\ \ > \ \  \frac{\min\limits_{\mathbf{t}\in[0,1]^d}\displaystyle\int\limits_{\R}\prod\limits_{i=1}^{n}f(x+\mathbf{t}\cdot a_i)\ \mathrm{d}x}{||f||_{L^1}^{n}}
\end{equation}
we show that
\begin{equation}
    \min\limits_{\mathbf{t}\in[0,1]^d}\displaystyle\int\limits_{\R}\prod\limits_{i=1}^{n}(f+g_1-g_2)(x+\mathbf{t}\cdot a_i)\ \mathrm{d}x\ \ > \ \  \min\limits_{\mathbf{t}\in[0,1]^d}\displaystyle\int\limits_{\R}\prod\limits_{i=1}^{n}f(x+\mathbf{t}\cdot a_i)\ \mathrm{d}x.
\end{equation}
By breaking open the minimum and expanding the product on the left hand side, we find
\begin{equation}
    \min\limits_{\mathbf{t}\in[0,1]^d}\int\limits_{\R}\sum\limits_{j=1}^n (g_1-g_2)(x+\mathbf{t}\cdot a_j)\prod\limits_{i=1,i\neq j}^{n}f(x+\mathbf{t}\cdot a_i)\ \mathrm{d}x\ >\ 0.
\end{equation}
Transferring those negative $g_2$ terms to the right we find
\begin{equation}
    \min\limits_{\mathbf{t}\in[0,1]^d}\int\limits_{\R}\sum\limits_{j=1}^n g_1(x+\mathbf{t}\cdot a_j)\prod\limits_{i=1,i\neq j}^{n}f(x+\mathbf{t}\cdot a_i)\ \mathrm{d}x\ \ > \ \  \max\limits_{\mathbf{t}\in[0,1]^d}\int\limits_{\R}\sum\limits_{j=1}^n g_2(x+\mathbf{t}\cdot a_j)\prod\limits_{i=1,i\neq j}^{n}f(x+\mathbf{t}\cdot a_i)\ \mathrm{d}x.
\end{equation}
Once again, $g_1$ and $g_2$, when integrated against a product, return the value of that product evaluated at a specific value of $x$ as $\e \to 0$, giving
\begin{equation}
    \min\limits_{\mathbf{t}\in[0,1]^d}\sum\limits_{j=1}^n \prod\limits_{i=1,i\neq j}^{n}f(x_1+\mathbf{t}\cdot (a_i-a_j))\ \ > \ \  \max\limits_{\mathbf{t}\in[0,1]^d}\sum\limits_{j=1}^n \prod\limits_{i=1,i\neq j}^{n}f(x_2+\mathbf{t}\cdot (a_i-a_j)).
\end{equation}
Taking the best possible $x_1,x_2$ gives the second half of Theorem $\ref{thm:min_t}$.
\end{proof}


\section{Future work}\label{section:future_work}

Theorem \ref{thm:min_t} gives conditions which must hold for any $f$ maximizing \eqref{eqn:min_t}. Future work might be able to show whether or not such functions exist. We have defined a much broader class of convolution-type inequalities than the one studied in \cite{BS}. There, the authors' focus is on placing upper and lower bounds on equation \eqref{eqn:first_problem}. Can we find (the best possible) constants $C_1,C_2$, and function $f_0$, all depending on $d,n,A$, such that for any choice of $f \in L^1$, we have
\begin{equation}
    \frac{\min\limits_{\mathbf{t}\in[0,1]^n}\displaystyle\int\limits_{\R}\prod\limits_{i=1}^{n}f(x+t_i)\ \mathrm{d}x }{||f||_{L^1}^{n}} \ \ \le\  \  C_1
\end{equation}
while
\begin{equation}
    \frac{\min\limits_{\mathbf{t}\in[0,1]^n}\displaystyle\int\limits_{\R}\prod\limits_{i=1}^{n}f_0(x+t_i)\ \mathrm{d}x}{||f_0||_{L^1}^{n}} \ \ \ge \ \  C_2?
\end{equation}

While inspired by autocorrelations found in number theory, connections to the Brascamp-Lieb inequality suggests that further abstraction could prove fruitful. The standard formulation of Brascamp-Lieb considers functions products of functions $f_i$ each operating on domain $\R^{n_i}$, while here we limited ourselves to functions on $\R$. Our perturbation approach appears quite general; could this technique be effective in the most general setting?

\vspace{0.5cm}

\ \\

\end{document}